\newif\ifpdf
\numberwithin{equation}{section} 
\def\RR{\mathbb R}
\newcommand{\dint}{\displaystyle\int}   
\newcommand{\h}[1]{\hat{#1}}
\newcommand{\ud}{\,d} 
\newcommand{\Sym}{\mathbb{S}}
\renewcommand\P{{\mathcal P}}
\renewcommand{\div}{\operatorname{div}}
\newtheorem{thm}{Theorem}[section] 
\newtheorem{lemma}[thm]{Lemma}
\def\whsq{\vbox to 5.8pt 
{\offinterlineskip\hrule 
\hbox to 5.8pt{\vrule height 
5.1pt\hss\vrule height 5.1pt}\hrule}} 
\def\Qed{{\hfill {\whsq}}} 
\def\<{\langle} 
\def\>{\rangle} 
\def\PP{{\mathop{{\rm I}\kern-.2em{\rm P}}\nolimits}} 
\def\FF{{\mathop{{\rm I}\kern-.2em{\rm F}}\nolimits}}   
\def\ZZ{{\mathop{{\rm I}\kern-.2em{\rm Z}}\nolimits}} 
\newlength{\sidemargin} 
\begin{document}
\title[]{
Two remarks on rectangular mixed finite elements for elasticity 
}


\author{Gerard Awanou}
\address{Department of Mathematical Sciences,
Northern Illinois University,
Dekalb, IL, 60115}
\email{awanou@math.niu.edu}
\urladdr{http://www.math.niu.edu/\char'176awanou}

\begin{abstract}
The lowest order nonconforming rectangular element in three dimensions involves
54 degrees of freedom for the stress and 12 degrees of freedom for the displacement.
With a modest increase in the number of degrees of freedom (24 for the stress),
we obtain a conforming rectangular element for linear elasticity in three dimensions.
Moreover, unlike the conforming plane rectangular or simplicial elements, this element does not involve any vertex degrees of freedom.
Second, we remark that further low order elements can be constructed by approximating 
the displacement with rigid body motions. This results in a pair of conforming elements with 72 degrees of freedom for the stress and
6 degrees of freedom for the displacement.
\end{abstract}

\maketitle

\section{Introduction}
Let $\Omega \subset \mathbb{R}^3$ be a 
contractible polygonal domain
occupied by a linearly elastic body and let $ L^2(\Omega, \mathbb{R}^3) $ be the space
of square integrable vector fields. We denote by $\Sym$ the space
of $3\times 3$ symmetric matrix fields and by 
$ H(\div,\Omega,
\Sym) $ the space
of square integrable symmetric matrix fields with divergence, taken row-wise, 
square integrable. The stress field $\sigma$ and the displacement $u$ are the unknowns in the elasticity equations
$$
A \sigma = \epsilon(u), \qquad \div \sigma = f, \quad \text{in} \, \Omega.
$$
Here $\epsilon(u)=(\partial u_i/\partial x_j + \partial u_j/\partial x_i)_{i,j=1,\ldots,3}$ 
is the symmetric part of the gradient and $f$ encodes the body forces.
The compliance tensor $A = A (x) : \Sym \to \Sym$ is given, bounded and symmetric
positive definite uniformly with respect to $x \in \Omega$.
The pair $(\sigma,u)$ is the unique solution of the Hellinger-Reissner variational 
formulation: Find $(\sigma,u) \in H(\div,\Omega,\Sym) \times L^2(\Omega, \mathbb{R}^3) $ 
such that 

\begin{align}
\begin{split} \label{cf}
\int_{\Omega} (A \sigma : \tau + \div \tau \cdot u) \ud x & = 0, \qquad \tau \in H(\div,\Omega,\Sym), \\
\int_{\Omega} \div \sigma \cdot v \ud x & = \int_{\Omega} f \cdot v \ud x, \qquad v \in L^2(\Omega, 
\mathbb{R}^3),
\end{split}
\end{align}
where 
$\sigma: \tau = \sum_{i,j=1}^3
\sigma_{ij} \tau_{ij}$ is the Frobenius product of $\sigma$ and $\tau$
and we assume $u=0$ on $\partial{\Omega}$.

For several decades, the existence of stable, conforming mixed elements for elasticity was an open problem.
Starting with the seminal work of Arnold and Winther, 
\cite{Arnold2002,Arnold2003}, for triangular elements, extended to
tetrahedral elements in \cite{Adams2005,
Arnold2008,
Guzman2010}, conforming and nonconforming mixed 
finite elements for elasticity on rectangular meshes have been constructed by 
several authors 
\cite{Arnold2005,Awanou2009, 
Yi2005, Yi2006, Hu2007/08, Man2009, Chen2010}. 
We refer to \cite{Arnold2002,Awanou2009,Arnold2007,Awanou10b}
and the references therein for alternative approaches to mixed formulations of the elasticity equations.
In particular, we note advances on mixed finite elements for elasticity where the symmetry of the stress field
is enforced weakly using Lagrange multipliers, 
\cite{Amara1979,Arnold1984a,Stenberg1986,Stenberg1988,Stenberg1988a,Morley1989,Arnold2006,Arnold2007,Falk08,Guzman10c,Guzman10b,Guzman10d,
Arnold2010b}. In general, it requires a great deal of ingenuity to construct mixed finite elements for elasticity.

The element described in this paper is the only one known, in three dimension on rectangular meshes, which has strong symmetry for the stress field and is $H(\div)$
conforming. The tetrahedral analogues have a high number of degrees of freedom and involve vertex degrees of freedom.
In general, on an arbitrary triangulation, vertex degrees of freedom are unavoidable for conforming elements for the stress
field with strong symmetry \cite{Arnold2002,Arnold2008}. Although it could be expected that on some triangulation, 
elements can be constructed with no vertex degrees of freedon \cite{Guzman10b}, it was a surprise to us that this is possible
on rectangular meshes in three dimensions which are prefered over tetrahedral elements by some engineers. 
Progress towards the discovery of the element was made possible by the observation in \cite{Chen2010} that for plane elasticity, no vertex degrees
of freedom are necessary for the normal normal component of the stress field.
Although the resulting element
is relatively complicated to use, 72 degrees of freedom for the stress or 12 degrees of freedom on average for each component of the stress field,
we believe the strong symmetry property of the element justifies the additional degrees of freedom (the lowest order nonconforming element involves
54 degress of freedom for the stress). An example of
a preference in some situations for elements with strong symmetry for two dimensional elasticity is \cite{Nicaise2008} p. 4. We quote
``Unfortunately, there exists no simple low order conforming element for \ldots''
($H(\div,\Omega,\Sym)$). ``Possible remedies are to use macro-elements or to weaken the symmetry condition, \ldots''
``However, if we weaken the symmetry, an additional factor due to the nonconformity of the approach occurs''. We claim that for the three dimensional problem, the element described in this paper is the only reasonable option available.


The paper is organized as follows: In the next section we describe the finite
element spaces on each rectangle after outlining a general framework for the construction of
mixed finite elements for elasticity. We show how one may obtain shape functions for three dimensional 
rectangular elements for elasticity from the corresponding plane elements using tensor products. In Section 3
we prove stability and convergence of the mixed finite elements using arguments now standard. We conclude with
some remarks on low order elements using 
the space of rigid body motions to approximate the displacement and the extension to higher order elements.

\section{Description of the finite elements}
We denote by 
$H^k(K, X)$ the
space of functions with domain $K \subset \RR^3$, taking values in the finite
dimensional space $X$, and with all derivatives of order at most $k$
square integrable. For our purposes, $X$ will be either 
$\Sym$, $ \RR^3, \RR^2
\mbox{ or } \RR$, and in the latter case, we simply write $H^k(X)$. The
norms in $H^k(K,X)$ and $H^k(K)$ are denoted respectively by $|| \cdot
||_{H^k}$ and $|| \cdot ||_k$. 
Let $\mathcal{T}_h$ denote a conforming partition of $\Omega$ into rectangles $K$ of size bounded by $h$, which is
quasi-uniform in the sense that the aspect ratio of the rectangles is bounded by a fixed constant.
Whether $K$ is a plane rectangle or a three dimensional brick will be clear from the context.

We first outline a set of general criteria to satisfy in order to obtain stable mixed finite elements for the linear elasticity problem.
A conforming mixed finite element approximation of \eqref{cf} consists in
choosing spaces $\Sigma_h \subset H(\div,\Omega,\Sym)$ and $V_h \subset L^2(\Omega, \mathbb{R}^3)$ and determining $(\sigma_h,u_h) \in \Sigma_h 
\times V_h$ such that:
\begin{align}
\begin{split} \label{cfD}
\int_{\Omega} (A \sigma_h : \tau_h + \div \tau_h \cdot u_h) \ud x & = 0, \qquad 
\tau_h \in \Sigma_h, \\
\int_{\Omega} \div \sigma_h \cdot v_h \ud x & = \int_{\Omega} f \cdot v_h \ud x, \qquad v_h \in V_h.
\end{split}
\end{align}
We recall that a matrix field $\tau \in  \Sigma_h$ if and 
only if $\tau n$ is continuous across the internal faces of $\mathcal{T}_h$ 
where $n$ denotes one of the unit vectors normal to the internal face. It is now known that for stability, \cite{Arnold2002,Arnold2003,Awanou2009,Arnold2008},
the following two conditions are sufficient:
\begin{itemize}
\item
$\div \Sigma_h \subset V_h$

\item
There exists a linear operator $\Pi_h : H^1 (\Omega, \Sym) \rightarrow \Sigma_h$,
such that there is a constant $c$ independent of $h$  with $|| \Pi_h
\tau ||_0 \leq  c ||\tau||_1$ for all $\tau \in H^1 (\Omega, \Sym)$, and such that
$\div \Pi_h \tau = P_h \div \tau$ for all $\tau \in H^1(\Omega, \Sym)$ where
$P_h : L^2 (\Omega, \RR^2) \rightarrow V_h$ denotes the $L^2$ projection.
\end{itemize}
If we assume that an interpolation operator $\Pi_h : H^1 (\Omega, \Sym) \rightarrow \Sigma_h$
can be defined which reproduces the degrees of freedom of $\tau \in \Sigma_h$, the second condition is satisfied
if the degrees of freedom for the stress space are chosen such that
\begin{equation}
\dint_K (\div \Pi_h \tau - \div \tau) \cdot v \, \ud x = - \dint_K
(\Pi_h \tau - \tau ) : \epsilon(v) \, d x + \int_{\partial K} (\Pi_h \tau - \tau )
n \cdot v \, \ud s,
\end{equation}
vanishes. Therefore in addition to require normal continuity of the degrees of freedom, it is sufficient that 
the degrees of freedom include
$\int_f \tau n \cdot v \ud s$ and $\int_K \tau:\epsilon(v)$, for each face $f$ of an element $K$ and each $v \in V_K$.
If in addition, 
the interpolation operator reproduces piecewise constant matrix fields, 
by a standard scaling argument,
it is bounded and the error analysis also follows from now classical arguments.

Before describing the new three dimensional conforming elements, we first note that the shape functions
for the low order three dimensional nonconforming element in \cite{Man2009}, can be obtained
from their two dimensional analogue by a tensor product construction.

\subsection{Tensor product construction} \label{tensor}
We use the usual notation of $\mathcal{P}_k(K,X)$
for the space of polynomials on $K$ with values in $X$ of total degree
less than $k$ 
and $\mathcal{P}_{k_1, k_2}(K,X)$ for the space of polynomials of
degree at most $k_1$ in $x_1$ and of degree at most $k_2$ in $x_2$. 
Similarly,  
$\mathcal{P}_{k_1, k_2, k_3}(K,X)$ denotes the space of polynomials of
degree at most $k_1$ in $x_1$, of degree at most $k_2$ in $x_2$ and 
of degree at most $k_3$ in $x_3$.
We
write $ \mathcal{P}_k $, $\mathcal{P}_{k_1, k_2}$ 
and $\mathcal{P}_{k_1, k_2, k_3}$ respectively when $X = \RR$. 
When the domain $K$ in $\RR^3$ is obvious, $\mathcal{P}_k(x_i,x_j)$ and 
$\mathcal{P}_{k_1,k_2}(x_i,x_j)$ denote respectively
the space of polynomials of total degree $k$ in the variables $x_i$ and $x_j$
and of degree at most $k_1$ in $x_i$ and at most $k_2$ in $x_j$ for $i$ not necessarily smaller than $j$.
We also use a similar notation for spaces of polynomials on a face.
We will denote by 
$
\begin{pmatrix}
\mathcal{P}_{k_1,k_2} & \mathcal{P}_{k_3,k_4} \\
\mathcal{P}_{k_3,k_4} & \mathcal{P}_{k_5,k_6}  
\end{pmatrix}_{\!\Sym}
$ 
the space of symmetric matrix fields 
$\sigma=
\begin{pmatrix}
\sigma_{11} & \sigma_{12} \\
\sigma_{21} & \sigma_{22}  
\end{pmatrix}
$ such that $\sigma_{11} \in \mathcal{P}_{k_1,k_2}$, 
$\sigma_{12} = \sigma_{2,1} \in \mathcal{P}_{k_3,k_4}$ and $\sigma_{22} 
\in \mathcal{P}_{k_5,k_6}$. Similarly the notation $\begin{pmatrix}
\mathcal{P}_{k_1,k_2} \\
\mathcal{P}_{k_3,k_4} 
\end{pmatrix}$ for $\mathcal{P}_{k_1,k_2} \times 
\mathcal{P}_{k_3,k_4}$ will also
be used. By
$
\begin{pmatrix}
\mathcal{P}_{k_1,k_2,k_3} & \mathcal{P}_{k_4,k_5,k_6} & \mathcal{P}_{k_7,k_8,k_9} \\
 \mathcal{P}_{k_4,k_5,k_6} &  \mathcal{P}_{k_{10},k_{11},k_{12}} &  
\mathcal{P}_{k_{13},k_{14},k_{15}} 
\\
\mathcal{P}_{k_7,k_8,k_9} & \mathcal{P}_{k_{13},k_{14},k_{15}} & 
\mathcal{P}_{k_{16},k_{17},k_{18}}
\end{pmatrix}_{\!\Sym}
$
we mean the space of symmetric matrix fields $\sigma=
\begin{pmatrix}
\sigma_{11} & \sigma_{12} &  \sigma_{13} \\
\sigma_{21} & \sigma_{22} &  \sigma_{23} \\
\sigma_{31} & \sigma_{32} &  \sigma_{33}
\end{pmatrix}
$ such that $\sigma_{11} \in \mathcal{P}_{k_1,k_2,k_3}$,
$\sigma_{22}
\in \mathcal{P}_{k_7,k_8,k_9}$, $\sigma_{33}
\in \mathcal{P}_{k_{13},k_{14},k_{15}}$,
$\sigma_{12} = \sigma_{21} \in \mathcal{P}_{k_4,k_5,k_6}$, $\sigma_{13}=\sigma_{31}
\in \mathcal{P}_{k_7,k_8,k_9}$ and $\sigma_{23}=\sigma_{32}
\in \mathcal{P}_{k_{13},k_{14},k_{15}}$.
We will also write $\begin{pmatrix}
\mathcal{P}_{k_1,k_2,k_3} \\
\mathcal{P}_{k_4,k_5,k_6} \\
\mathcal{P}_{k_7,k_8,k_9}
\end{pmatrix}$ in place of $\mathcal{P}_{k_1,k_2,k_3} \times
\mathcal{P}_{k_4,k_5,k_6} \times \mathcal{P}_{k_7,k_8,k_9}$.
Often at the place of the spaces $\mathcal{P}_{k, l, m}(K,X)$,
we will write explicit vector spaces as span.

For two finite dimensional spaces $E$ and $F$, we denote by $E \otimes F$ the vector space equal to
$\mbox{span} \{e f\}$, $e \in E, f \in F$. A low order plane nonconforming element described in \cite{Hu2007/08}
uses as shape functions on an element $K$ for the stress 
$$
\sigma \in \Sigma_{\text{ncf}}^{2\text{D}}(K) = \begin{pmatrix}
P_{1,1} & \mbox{span}\{1, x_1, x_2, x_1^2, x_2^2\} \cr
\mbox{span}\{1, x_1, x_2, x_1^2, x_2^2\} & P_{1,1}
\end{pmatrix}_{\Sym},
$$
and for the displacement
$v \in V^{2\text{D}}(K) = \begin{pmatrix}
\mbox{span}\{1, x_2\} \cr
\mbox{span}\{1, x_1\}
\end{pmatrix}$.
We postulate that for the three dimensional generalization, the choice of finite dimensional spaces for each component
of the stress field $\begin{pmatrix}
\sigma_{11} & \sigma_{12} & \sigma_{13} \cr
\sigma_{12} & \sigma_{22} & \sigma_{23} \cr
\sigma_{13} & \sigma_{23} & \sigma_{33}
\end{pmatrix}$ and each component of the displacement 
$\begin{pmatrix}v_{1} \cr v_2 \cr v_3  \end{pmatrix}$ should conform to the following requirements:
\begin{itemize}
\item $\begin{pmatrix} \sigma_{11} & \sigma_{12} \cr \sigma_{12} & \sigma_{22} \end{pmatrix}$
and $\begin{pmatrix}v_{1} \cr v_2 \end{pmatrix}$ belong to the plane elements in the variables $x_1,x_2$ when restricted to the planes $x_3=0,1$,
\item   $\begin{pmatrix} \sigma_{11} & \sigma_{13} \cr \sigma_{13} & \sigma_{33} \end{pmatrix}$
and $\begin{pmatrix}v_{1} \cr v_3 \end{pmatrix}$ belong to the plane elements 
in the variables $x_1,x_3$ when restricted to the planes $x_2=0,1$,
\item   $\begin{pmatrix} \sigma_{22} & \sigma_{23} \cr \sigma_{23} & \sigma_{33} \end{pmatrix}$
and $\begin{pmatrix}v_{2} \cr v_3 \end{pmatrix}$ belong to the plane elements 
in the variables $x_2,x_3$ when restricted to the planes $x_1=0,1$.
\end{itemize}
We note that the above statements can be summarized as follows: for $i<j, i,j \in \{1,2,3\}, k \notin \{i,j\} $,
$\begin{pmatrix} \sigma_{ii} & \sigma_{ij} \cr \sigma_{ij} & \sigma_{jj} \end{pmatrix}$
and $\begin{pmatrix}v_{i} \cr v_j \end{pmatrix}$ belong to the plane elements 
in the variables $x_i,x_j$ when restricted to the planes $x_k=0,1$, that is
$\sigma_{ii} \in \P_{1,1}(x_i,x_j), \sigma_{i,j} \in \mbox{span}\{1, x_i, x_j, x_i^2, x_j^2\}$
and $v_i \in \mbox{span}\{1, x_j\}, v_j \in \mbox{span}\{1, x_i\}$ on the planes $x_k=0,1$.

A set of shape functions for the three dimensional elasticity problem is obtained by taking as finite element spaces
for the components of the stress and the displacement, tensor products of the plane finite element spaces in the variables $x_i,x_j$
with \mbox{span}$\{1,x_k\}$. For this note that for $i<j, i,j \in \{1,2,3\}, k \notin \{i,j\} $,
\begin{align*}
\begin{split}
\P_{1,1}(x_i,x_j) \otimes  \mbox{span} \{1,x_k\} & = \P_{1,1,1},  \\
\mbox{span}\{1, x_i, x_j, x_i^2, x_j^2\} \otimes \mbox{span} \{1,x_k\} & = 
\mbox{span}\{1, x_i, x_j, x_i^2, x_j^2, x_k, \\
\qquad \qquad & x_i x_k, x_j x_k, x_i^2 x_k, x_j^2x_k \}
\\
\mbox{span} \{1,x_i\}  \otimes  \mbox{span} \{1,x_k\}, 
&= \mbox{span} \{1,x_i,x_k, x_ix_k\} =\P_{1,1}(x_i,x_k).
\end{split}
\end{align*}
We thus obtain the shape functions for the low order three dimensional element described in \cite{Man2009}. That is
if we denote by $\Sigma_{\text{ncf}}^{3\text{D}}(K)$ the finite element for the stress and $V^{3\text{D}}(K)$ the finite element space for the displacement,
$\sigma \in \Sigma_{\text{ncf}}^{3\text{D}}(K)$ if and only if $\sigma_{ii} \in \P_{1,1,1}, i=1,2,3$, $\sigma_{i,j} \in 
\mbox{span}\{1, x_i, x_j, x_i^2, x_j^2, x_k,$ $ x_i x_k, x_j x_k, x_i^2 x_k, x_j^2x_k \}$ 
and $v \in V^{3\text{D}}(K)$ if and only if $v_i \in \P_{1,1}(x_j,x_k)$ for $i<j, i,j \in \{1,2,3\}, k \notin \{i,j\}$.
Alternatively, if we define 
$$
\tilde{\P}_2(x_i,x_j) = \mbox{span}\{1, x_i, x_j, x_i^2, x_j^2, x_k, x_i x_k, x_j x_k, x_i^2 x_k, x_j^2x_k \},
$$
then
$$
\Sigma_{\text{ncf}}^{3\text{D}}(K) = \begin{pmatrix}
\P_{1 1 1} & \tilde{\P}_2(x_1,x_2) & \tilde{\P}_2(x_1,x_3) \cr
\tilde{\P}_2(x_1,x_2) & \P_{1 1 1} & \tilde{\P}_2(x_2,x_3) \cr
\tilde{\P}_2(x_1,x_3) & \tilde{\P}_2(x_2,x_3) & \P_{111}
\end{pmatrix}_{\Sym}.
$$
\subsection{Description of the elements}
We first describe the shape functions of a conforming rectangular element recently introduced in \cite{Chen2010}.
Then, following the strategy described in Section \ref{tensor}, we give a set of shape functions for the
finite element spaces in three dimensions. We give a more concise
description of the stress finite element space
before giving a unisolvent set of degrees of freedom.
Let us denote by $\Sigma_{\text{cf}}^{2\text{D}}(K)$ the space of shape functions for the stress for the plane conforming element and
by $\Sigma_{\text{cf}}^{3\text{D}}(K)$ the corresponding three dimensional element. Then \cite{Chen2010},
\begin{align*}
\begin{split}
\Sigma_{\text{cf}}^{2\text{D}}(K)& =\Sigma_{\text{ncf}}^{2\text{D}}(K) + \mbox{span}
\bigg\{
\begin{pmatrix}
-\frac{1}{2}x_1^2 & x_1 x_2 \cr
x_1 x_2 & -\frac{1}{2}x_2^2
\end{pmatrix}, \begin{pmatrix}
-\frac{1}{3}x_1^3 & x_1^2 x_2 \cr
x_1^2 x_2 & -x_1 x_2^2
\end{pmatrix}, \begin{pmatrix}
-x_1^2 x_2 & x_1x_2^2 \cr
x_1x_2^2 & -\frac13 x_2^3
\end{pmatrix}, \\
& \qquad \qquad \qquad \qquad \qquad \qquad \qquad \qquad \qquad \begin{pmatrix}
-\frac23 x_1^3 x_2 & x_1^2 x_2^2 \cr
x_1^2 x_2^2 & -\frac23 x_1x_2^3
\end{pmatrix}
\bigg\}.
\end{split}
\end{align*}
We choose as finite element space for the displacement the same space $V^{3\text{D}}(K)$ 
used in the nonconforming case.
Note that it can equivalently be written as
$\P_{011} \times \P_{101} \times \P_{110}$ 
and will be denoted by $V(K)$ in the remaining part of this paper.
We define $\Sigma_{\text{cf}}^{3\text{D}}(K)$ as the direct sum of $\Sigma_{\text{ncf}}^{3\text{D}}(K)$ 
and the span of 24 additional shape functions 
$\sigma^{ij,(k)}, i < j, i,j=1,2,3$ and $k=1,\ldots,8$
obtained from the additional shape functions in the definition of $\Sigma_{\text{cf}}^{2\text{D}}(K)$
in the variables $x_i$ and $x_j$ and the tensor product construction outlined in Section \ref{tensor}.
For a given matrix, we will refer to the entry on the $i$th row and $j$th column as the $(i,j)$ component.
\begin{align*}
\begin{split}
\sigma^{12,(1)} & =\begin{pmatrix}
-\frac12 x_1^2 & x_1 x_2 & 0 \cr
x_1 x_2 & -\frac12 x_2^2 & 0 \cr
0 & 0 & 0
\end{pmatrix}, \,
\sigma^{12,(2)}=\begin{pmatrix}
-\frac12 x_1^2 x_3 & x_1 x_2 x_3 & 0 \cr
x_1 x_2 x_3 & -\frac12 x_2^2 x_3 & 0 \cr
0 & 0 & 0
\end{pmatrix}, \\
  \sigma^{12,(3)}& =\begin{pmatrix}
-\frac13 x_1^3 & x_1^2 x_2 & 0 \cr
x_1^2 x_2 & -x_1 x_2^2 & 0 \cr
0 & 0 & 0
\end{pmatrix}, \,
\sigma^{12,(4)}=\begin{pmatrix}
-\frac13 x_1^3 x_3 & x_1^2 x_2 x_3 & 0 \cr
x_1^2 x_2 x_3 & -x_1 x_2^2 x_3 & 0 \cr
0 & 0 & 0
\end{pmatrix}, 
\end{split}
\end{align*}
\begin{align*}
\begin{split}
\sigma^{12,(5)} &  =\begin{pmatrix}
-x_1^2 x_2 & x_1 x_2^2 & 0 \cr
x_1 x_2^2 & -\frac13 x_2^3 x_3 & 0 \cr
0 & 0 & 0
\end{pmatrix}, \,
 \sigma^{12,(6)}=\begin{pmatrix}
-x_1^2 x_2 x_3 & x_1 x_2^2 x_3 & 0 \cr
x_1 x_2^2 x_3 & -\frac13 x_2^3 x_3 & 0 \cr
0 & 0 & 0
\end{pmatrix}, \\
\sigma^{12,(7)} & =\begin{pmatrix}
-\frac23 x_1^3 x_2 & x_1^2 x_2^2 & 0 \cr
x_1^2 x_2^2 & -\frac23 x_1 x_2^3 & 0 \cr
0 & 0 & 0
\end{pmatrix}, \,
\sigma^{12,(8)}=\begin{pmatrix}
-\frac23 x_1^3 x_2 x_3 & x_1^2 x_2^2 x_3 & 0 \cr
x_1^2 x_2^2 x_3 & -\frac23 x_1 x_2^3 x_3 & 0 \cr
0 & 0 & 0
\end{pmatrix}. 
\end{split} 
\end{align*}
Note that the third column of all the above matrices is the zero vector. We conclude that for $k=1,\ldots,8$, $\sigma^{12,(k)}$ has its $(1,3)$
and $(2,3)$ component 0, but the $(1,2)$ component non-zero.
\begin{align*}
\begin{split}
\sigma^{13,(1)} & =
\begin{pmatrix}
-\frac12 x_1^2 & 0 & x_1 x_3 \cr
0 & 0 & 0 \cr
x_1 x_3 & 0 & -\frac12 x_3^2
\end{pmatrix}, \,
\sigma^{13,(2)}=\begin{pmatrix}
-\frac12 x_1^2 x_2 & 0 & x_1 x_2 x_3 \cr
0 & 0 & 0 \cr
x_1 x_2 x_3 & 0 & -\frac12 x_2 x_3^2
\end{pmatrix}, \\
\sigma^{13,(3)}& =\begin{pmatrix}
-\frac13 x_1^3 & 0 & x_1^2 x_3 \cr
0 & 0 & 0 \cr
x_1^2 x_3 & 0 & -x_1 x_3^2
\end{pmatrix}, \,
\sigma^{13,(4)}= \begin{pmatrix}
-\frac13 x_1^3 x_2 & 0 & x_1^2 x_2 x_3 \cr
0 & 0 & 0 \cr
x_1^2 x_2 x_3 & 0 & x_1 x_2 x_3^2
\end{pmatrix}, \\
\sigma^{13,(5)} &=\begin{pmatrix}
-x_1^2 x_3 & 0 & x_1 x_3^2 \cr
0 & 0 & 0 \cr
x_1 x_3^2 & 0 & -\frac13 x_3^3
\end{pmatrix}, \,
\sigma^{13,(6)}=\begin{pmatrix}
-x_1^2 x_2 x_3 & 0 & x_1 x_2 x_3^2 \cr
0 & 0 & 0 \cr
x_1 x_2 x_3^2 & 0 & -\frac13 x_2 x_3^3
\end{pmatrix}, \\
\sigma^{13,(7)} &= \begin{pmatrix}
-\frac23 x_1^3 x_3 & 0 & x_1^2 x_3^2 \cr
0 & 0 & 0 \cr
x_1^2 x_3^2 & 0 & -\frac23 x_1 x_3^3
\end{pmatrix},
\sigma^{13,(8)}=\begin{pmatrix}
-\frac23 x_1^3 x_2 x_3 & 0 & x_1^2 x_2 x_3^2 \cr
0 & 0 & 0 \cr
x_1^2 x_2 x_3^2 & 0 & -\frac23 x_1 x_2 x_3^3
\end{pmatrix}.
\end{split} 
\end{align*}
Note that the second row or second column of all the matrices $\sigma^{13,(k)}, k=1,\ldots,8$ is the zero vector. We conclude that for 
$k=1,\ldots,8$, $\sigma^{13,(k)}$ has its $(1,2)$
and $(2,3)$ component 0, but the $(1,3)$ component non-zero.
\begin{align*}
\begin{split}
\sigma^{23,(1)} &=\begin{pmatrix}
0 & 0 & 0 \cr
0 & -\frac12 x_2^2 & x_2 x_3 \cr
0 & x_2 x_3 & -\frac12 x_3^2
\end{pmatrix}, \,
\sigma^{23,(1)} = \begin{pmatrix}
0 & 0 & 0 \cr
0 & -\frac12 x_1 x_2^2 & x_1 x_2 x_3 \cr
0 & x_1 x_2 x_3 & -\frac12 x_1 x_3^2
\end{pmatrix},\\
\sigma^{23,(3)} &=\begin{pmatrix}
0 & 0 & 0 \cr
0 & -\frac13 x_2^3 & x_2^2 x_3 \cr
0 & x_2^2 x_3 & -x_2 x_3^2
\end{pmatrix}, \,
\sigma^{23,(4)} =\begin{pmatrix}
0 & 0 & 0 \cr
0 & -\frac13 x_1 x_2^3 & x_1 x_2^2 x_3 \cr
0 & x_1 x_2^2 x_3 & -x_1 x_2 x_3^2
\end{pmatrix}, 
\end{split}
\end{align*}
\begin{align*}
\begin{split}
\sigma^{23,(5)} &= \begin{pmatrix}
0 & 0 & 0 \cr
0 & -x_2^2 x_3 & x_2 x_3^2 \cr
0 & x_2 x_3^2 & -\frac13 x_3^3
\end{pmatrix},\,
\sigma^{23,(6)} =\begin{pmatrix}
0 & 0 & 0 \cr
0 & -x_1 x_2^2 x_3 & x_1 x_2 x_3^2 \cr
0 & x_1 x_2 x_3^2 & -\frac13 x_1 x_3^3
\end{pmatrix}, \\
\sigma^{23,(7)} & =\begin{pmatrix}
0 & 0 & 0 \cr
0 & -\frac23 x_2^3 x_3 & x_2^2 x_3^2 \cr
0 & x_2^2 x_3^2 & -\frac23 x_2 x_3^3
\end{pmatrix}, \,
\sigma^{23,(8)} = \begin{pmatrix}
0 & 0 & 0 \cr
0 & -\frac23 x_1 x_2^3 x_3 & x_1 x_2^2 x_3^2 \cr
0 & x_1 x_2^2 x_3^2 & -\frac23 x_1 x_2 x_3^3
\end{pmatrix}.
\end{split}
\end{align*}
Note that the first row or first column of all the matrices $\sigma^{23,(k)}, k=1,\ldots,8$ is the zero vector. We conclude that for 
$k=1,\ldots,8$, $\sigma^{23,(k)}$ has its $(1,2)$
and $(1,3)$ component 0, but the $(1,3)$ component non-zero.

We can summarize the above observations by stating that 
\begin{equation}
\sigma^{ij,(k)}_{pq}, k=1,\ldots,8=0 \, \text{for} \, i<j \, \text{unless} \, (p,q)=(i,j). \label{observe}
\end{equation}
Note that \eqref{observe} can also be seen directly from the tensor product construction.
We also note that like their two dimensional analogue, the above additional shape functions are divergence free and that
the dimension of $\Sigma_{\text{cf}}^{3\text{D}}(K)$ is $54+24=78$. 
This is implicit in the tensor product construction and also follows from Lemma \ref{lemdof} below.
We can describe $\Sigma_{\text{cf}}^{3\text{D}}(K)$
in a more concise form as follows: $\tau \in \Sigma_{\text{cf}}^{3\text{D}}(K)$ if and only if
\begin{equation}
\tau \in \Sigma(K):=\bigg\{\begin{pmatrix} \P_{311} & \P_{2 2 1} & \P_{212} \cr
\P_{2 2 1} & \P_{131} & \P_{122} \cr
\P_{212} & \P_{122} & \P_{113}
 \end{pmatrix}_{\Sym}, \div \tau \in \begin{pmatrix} \P_{011} \cr \P_{101} \cr \P_{110}  \end{pmatrix}\bigg\}.
\end{equation}
Clearly $\Sigma_{\text{cf}}^{3\text{D}}(K)$ is contained in the latter space. Notice that the dimension of
$S_2:=\begin{pmatrix} \P_{311} & \P_{2 2 1} & \P_{212} \cr
\P_{2 2 1} & \P_{131} & \P_{122} \cr
\P_{212} & \P_{122} & \P_{113}
 \end{pmatrix}$ is 102. And for $\tau \in S_2, \div \tau \in \begin{pmatrix} \P_{211} \cr \P_{121} \cr \P_{112}  \end{pmatrix}$
 which has dimension 36. Hence the constraint $\div \tau \in V(K)$
imposes 36-12=24 constraints. It follows that the dimension
 of $\Sigma(K)$ is at least 78. That its dimension is 78 follows from the unisolvency of the degrees of freedom we give in Lemma \ref{lemdof}
 below. Hence, $\Sigma(K)=\Sigma_{\text{cf}}^{3\text{D}}(K)$.
 
 Before we give the degrees of freedom of $\Sigma_{\text{cf}}^{3\text{D}}(K)$, we explain 
 why unlike the conforming plane rectangular or simplicial elements, this element does not involve any vertex degrees of freedom.
 In general, on an arbitrary simplicial triangulation, vertex degrees of freedom are necessary \cite{Arnold2002,Arnold2008}. It will be enough to consider the unit cube $[0,1]^3$.
 We first write explicitly the condition of normal continuity of
 $$
\sigma = \begin{pmatrix}
\sigma_{11} & \sigma_{12} & \sigma_{13} \cr
\sigma_{12} & \sigma_{22} & \sigma_{23} \cr
\sigma_{13} & \sigma_{23} & \sigma_{33}
\end{pmatrix}.
$$
Clearly on
horizontal faces $x_3 = 0,1$, 
$\sigma n  = \pm(\sigma_{13}, \sigma_{23}, \sigma_{33})$.
On
vertical faces $x_1 = 0,1$, 
$\sigma n = \pm (\sigma_{11}, \sigma_{12}, \sigma_{13})$
and on 
lateral faces $x_2 = 0,1$
$\sigma n = \pm (\sigma_{12}, \sigma_{22}, \sigma_{23})$.
We therefore need
$\sigma_{33}$ continuous on faces $x_3 = 0,1$,
$\sigma_{22}$ continuous on faces $x_2 = 0,1$ and
$\sigma_{11}$ continuous on faces $x_1 = 0,1$.
Since for $i=1,2,3$ the faces $x_i=0$ and $x_i=1$ do not meet, no edge or vertex degrees of freedom are necessary
for $\sigma_{ii}$. Next, we need
$\sigma_{23}$ continuous on faces $x_3 = 0,1$, $x_2 = 0,1$, 
so we need $\sigma_{23}$ continuous on the edges $x_2 = 0$, $x_3 = 0$; $x_2 = 1$, $x_3
= 0$; $x_2 = 0, x_3 = 1$; $x_2 = 1$, $x_3 = 1$.
And similar conditions for $\sigma_{13}$ and $\sigma_{12}$, that is
for $i<j, i,j \in \{1,2,3\}, k \notin \{i,j\}$ we need $\sigma_{ij}$ continuous on the faces $x_i=0,1$ and $x_j=0,1$.
This requires the continuity of $\sigma_{ij}$ on the four edges $x_i,x_j \in \{0,1\}$.
No vertex degrees of freedom are therefore necessary here.

Let $\hat{K}=[0,1]^3$ be the reference cube and let $F: \hat{K} 
\to K$
be an affine mapping onto $K$, $F(\hat{x}) = B \hat{x} +b$, with $b=(b_1,b_2,b_3)^T$ 
and
$$
B= \begin{pmatrix}
h_1 & 0 & 0\\
0 & h_2 & 0\\
0& 0& h_3
\end{pmatrix}.
$$
To maintain the analogy with the simplicial case, although $B$ is symmetric we will write $B^T$. 
Given a matrix field $\hat{\tau}: \h{K} \to \Sym$, define $\tau: K \to 
\Sym$
by the matrix Piola transform
$$
\tau(x) = B \h{\tau}(\h{x}) B^T.
$$
It is not difficult to verify that $\tau \in \Sigma(K)$ if and only if $\h{\tau} \in \Sigma (\h{K})$.
We recall that an edge of $K$ is given by $x_i,x_j \in \{0,1\}$ and we denote by 
 $\dint_{\{x_i = 0, 1, \ x_j = 0, 1\}}$ an integral over such an edge.
We now give the degrees of freedom on $\Sigma(\h{K})$. 
\begin{lemma} \label{lemdof}
An element $\h{\sigma} \in \Sigma(\h{K})$ is uniquely determined by the following degrees of freedom
\begin{enumerate}
\item $\dint_{\{x_i = 0, 1, \ x_j = 0, 1\}} \h{\sigma}_{ij} \h{v} \, \ud x_f$, $ \h{v} \in
\mbox{span}\{1, x_k,\}$ for $k \notin \{i, j\}$, $i < j$ $i,j=1,2,3,$ (24 degrees of freedom),
\item $\dint_{x_i=0,1} \h{\sigma}_{ii} \h{v} \ud x, \h{v} \in \P_{1,1}(x_l,x_k), i\notin \{l,k\}, i=1,2,3,$ (24 degrees of freedom),
\item $\dint_{x_l = 0,1} \h{\sigma}_{ij} \h{v} \, \ud x_f$, $\h{v} \in \mbox{span}\{1, x_k\}$,
$k \notin \{i, j\}, i< j$, $l = i, j$, (24 degrees of freedom),
\item $\dint_K \h{\sigma}_{ij} \h{v} \, \ud x$, $\h{v} \in \mbox{span}\{1, x_k\}$, $k \notin \{i,
j\}, i<j, i,j=1,2,3$, (6 degrees of freedom).
\end{enumerate}
\end{lemma}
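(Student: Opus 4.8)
The plan is the standard unisolvency argument. The four families contain $24+24+24+6=78$ functionals in all, and it was already shown that $\dim\Sigma(\hat K)\ge 78$; hence it suffices to prove that an $\hat\sigma\in\Sigma(\hat K)$ annihilating every one of the $78$ functionals must vanish. This makes the evaluation map $\Sigma(\hat K)\to\RR^{78}$ injective, therefore an isomorphism, which gives unisolvency and, as a by-product, the equality $\dim\Sigma(\hat K)=78$ claimed above. So assume all degrees of freedom vanish on $\hat\sigma\in\Sigma(\hat K)$. I would keep track of the partial degrees imposed by membership in $\Sigma(\hat K)$: the diagonal entry $\hat\sigma_{ii}$ has degree at most $3$ in $x_i$ and at most $1$ in each other variable, and for $i<j$ with $k$ the remaining index the off-diagonal entry $\hat\sigma_{ij}$ has degree at most $2$ in $x_i$ and $x_j$ and at most $1$ in $x_k$. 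By the symmetric structure of $\Sigma(\hat K)$ everything can be carried out one pair $(i,j)$ at a time.

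\emph{Step 1 (off-diagonal entries, using families (1), (3), (4)).} On an edge $x_i,x_j\in\{0,1\}$ the restriction of $\hat\sigma_{ij}$ lies in $\mbox{span}\{1,x_k\}$, so family (1) forces $\hat\sigma_{ij}$ to vanish on all four such edges. Consequently, on the face $x_i=c$ with $c\in\{0,1\}$, $\hat\sigma_{ij}$ vanishes on the two opposite edges $x_j=0$ and $x_j=1$, so it is divisible by $x_j(1-x_j)$ with quotient in $\mbox{span}\{1,x_k\}$; family (3) with $l=i$ kills that quotient for $c=0$ and $c=1$, and $l=j$ does the same on the faces $x_j=0,1$. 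Hence $x_i(1-x_i)x_j(1-x_j)$ divides $\hat\sigma_{ij}$, and the degree count leaves $\hat\sigma_{ij}=x_i(1-x_i)x_j(1-x_j)(\alpha+\beta x_k)$; family (4) then gives $\alpha=\beta=0$. Here $8+8+2=18=\dim\P_{2,2,1}$, so families (1), (3), (4) are already unisolvent on each off-diagonal component, with no reference to the divergence constraint. Thus $\hat\sigma_{12}=\hat\sigma_{13}=\hat\sigma_{23}=0$.

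\emph{Step 2 (diagonal entries, using family (2) and then the divergence constraint).} The restriction of $\hat\sigma_{ii}$ to $x_i=0$ and to $x_i=1$ lies in $\P_{1,1}$ in the two remaining variables, and on each of those faces family (2) pairs it against a basis of $\P_{1,1}$ through the nondegenerate face $L^2$ inner product; hence $\hat\sigma_{ii}$ vanishes on the faces $x_i=0,1$ and $\hat\sigma_{ii}=x_i(1-x_i)p_{ii}$ with $p_{ii}\in\P_{1,1,1}$. At this point all $78$ functionals are exhausted, and the remaining $24$ free parameters (eight in each $p_{ii}$) are to be removed by the defining constraint $\div\hat\sigma\in V(\hat K)$. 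Since the off-diagonal entries are now zero, the $i$th component of $\div\hat\sigma$ is $\partial_i\hat\sigma_{ii}$, which must be independent of $x_i$; writing $p_{ii}=a+x_i b$ with $a,b\in\P_{1,1}$ in the other two variables, a one-line computation gives $\partial_i\hat\sigma_{ii}=a+2(b-a)x_i-3bx_i^2$, so independence of $x_i$ forces $b=0$ and then $a=0$. Therefore $\hat\sigma_{ii}=0$, so $\hat\sigma=0$.

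The only genuinely delicate step is the last one. Families (1), (3), (4) are designed for the off-diagonals and are consumed entirely there; family (2) controls only the boundary traces of the diagonal entries, and there are simply not enough ``diagonal'' functionals to pin down $\hat\sigma_{ii}\in\P_{3,1,1}$ by themselves. One must notice that the constraint $\div\hat\sigma\in V(\hat K)$ contributes exactly the missing $24$ conditions and that, once the off-diagonal entries are known to vanish, this constraint decouples into three independent scalar conditions, one per diagonal entry. Apart from that, the argument is only bookkeeping of partial polynomial degrees together with the nondegeneracy of one-dimensional $L^2$ pairings on $[0,1]$.
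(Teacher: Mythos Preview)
Your proof is correct. Step~1 coincides with the paper's argument for the off-diagonal entries. The difference is in Step~2: the paper, having built $\Sigma_{\text{cf}}^{3\text{D}}(\hat K)$ as $\Sigma_{\text{ncf}}^{3\text{D}}(\hat K)$ plus the span of the $24$ explicit matrices $\sigma^{ij,(k)}$, writes $\hat\sigma=\tilde\sigma+\sum_k\alpha^{ij,(k)}\sigma^{ij,(k)}$ and uses the monomial structure of the $(i,j)$ entries to deduce from $\hat\sigma_{ij}=0$ that all $\alpha^{ij,(k)}=0$; this forces $\hat\sigma\in\Sigma_{\text{ncf}}^{3\text{D}}(\hat K)$, where $\hat\sigma_{ii}\in\P_{1,1,1}$, so after family~(2) one has $\hat\sigma_{ii}=x_i(1-x_i)\tilde\sigma_{ii}$ and degree counting gives $\hat\sigma_{ii}=0$ immediately. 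You bypass the explicit shape functions entirely and invoke the defining constraint $\div\hat\sigma\in V(\hat K)$ directly on the residual $x_i(1-x_i)p_{ii}$, $p_{ii}\in\P_{1,1,1}$. Your route is more intrinsic---it works straight from the description of $\Sigma(\hat K)$ via the divergence condition and never touches the tensor-product basis---and it makes transparent that the $24$ ``missing'' conditions on the diagonal entries are supplied exactly by the divergence constraint. The paper's route, on the other hand, is the natural one given how the space was assembled, and it simultaneously verifies that the $24$ added shape functions are linearly independent from $\Sigma_{\text{ncf}}^{3\text{D}}(\hat K)$.
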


{\it Proof.}
We assume that all degrees of freedom vanish. 
Note that on the faces $x_i = 0,1$, $\h{\sigma}_{ij} \in \P_{2,1} (x_j, x_k)$, $k \notin
\{i, j\}$, $i < j, i,j=1,2,3$, (e.g. on the faces $x_2=0,1, \h{\sigma}_{23} \in \P_{2,1}(x_3,x_1)$)
and on an edge $x_i,x_j \in \{0,1\}$, $\h{\sigma}_{ij} \in \P_1(x_k)$.
By the first set of degrees of freedom, we have $\h{\sigma}_{ij} = 0$ on such an edge.
Thus on a face $x_i = 0,1$, $\h{\sigma}_{ij} = x_j(1 - x_j) \tilde{\sigma}_{ij}$, $\tilde{\sigma}_{ij} \in
\P_{0,1}(x_j, x_k)$.
Since $\dint_{x_i = 0,1} \h{\sigma}_{ij} \h{v}\, dx_f = 0$, $\h{v} \in \mbox{span}\{1,x_k\}$ by the third set of degrees of freedom, 
$\h{\sigma}_{ij} = 0$ on the faces $x_i = 0,1$.
Similarly, on the faces $x_j = 0,1$, $\h{\sigma}_{ij} \in \P_{2,1} (x_i, x_k)$, $k \notin
\{i, j\}$, $i < j, i,j=1,2,3$ and by the same argument,
$\h{\sigma}_{ij} = 0$ as well on the faces $x_j = 0,1$.

We therefore have $\h{\sigma}_{ij} = x_j(1 - x_j)x_i(1 - x_i) \tilde{\sigma}_{ij}$.
Since $\h{\sigma}_{ij} \in \P_{2,2,1}(x_i, x_j, x_k)$, we have $\tilde{\sigma}_{ij}
\in \P_{0,0,1}$.

With the last set of degrees of freedom, we conclude that
$\h{\sigma}_{ij} = 0, i<j, i,j=1,2,3$. 

Let us write
\begin{align*}
\h{\sigma} & = \tilde{\sigma}+\sum_{k=1}^8 \alpha^{12,(k)} \sigma^{12,(k)} + \alpha^{13,(k)} \sigma^{13,(k)} +\alpha^{23,(k)} \sigma^{23,(k)},
\\ 
&\qquad \qquad \quad \alpha^{ij,(k)} \in \mathbb{R}, i<j, i,j=1,2,3 \, \text{and} \, k=1,\ldots,8, \tilde{\sigma} \in \Sigma_{\text{cf}}^{3\text{D}}(K),
\end{align*} 
and recall that $\h{\sigma}_{12}=\h{\sigma}_{13}=\h{\sigma}_{23}=0$. We show that $\alpha^{12,(k)}=\alpha^{13,(k)}=\alpha^{23,(k)}=0, k=1,\ldots,8.$
This is a consequence of the tensor product construction  as the following argument shows.
By \eqref{observe}, for $i<j$
$$
\h{\sigma}_{ij} = \tilde{\sigma}_{ij}+\sum_{k=1}^8 \alpha^{ij,(k)} \sigma^{ij,(k)}_{ij}.
$$
Explicitly
\begin{align*}
\begin{split}
\h{\sigma}_{12} & = \tilde{\sigma}_{12}+\alpha^{12,(1)} x_1 x_2 + \alpha^{12,(2)} x_1 x_2 x_3 + \alpha^{12,(3)} x_1^2x_2 
+ \alpha^{12,(4)} x_1^2 x_2 x_3 + \alpha^{12,(5)} x_1 x_2^2 + \alpha^{12,(6)} x_1 x_2^2 x_3 \\
& \qquad \qquad \quad + \alpha^{12,(7)} x_1^2 x_2^2
+ \alpha^{12,(8)} x_1^2 x_2^2 x_3, \\
\h{\sigma}_{13} & = \tilde{\sigma}_{13}+\alpha^{13,(1)} x_1 x_3 + \alpha^{13,(2)} x_1 x_2 x_3 + \alpha^{13,(3)} x_1^2x_3 
+ \alpha^{13,(4)} x_1^2 x_2 x_3 + \alpha^{13,(5)} x_1 x_3^2 + \alpha^{13,(6)} x_1 x_2 x_3^2 \\
& \qquad \qquad \quad + \alpha^{13,(7)} x_1^2 x_3^2
+ \alpha^{13,(8)} x_1^2 x_2 x_3^2,\\
\h{\sigma}_{23} & = \tilde{\sigma}_{23}+\alpha^{23,(1)} x_2 x_3 + \alpha^{23,(2)} x_1 x_2 x_3 + \alpha^{23,(3)} x_2^2x_3 
+ \alpha^{23,(4)} x_1 x_2^2 x_3 + \alpha^{23,(5)} x_2 x_3^2 + \alpha^{23,(6)} x_1 x_2 x_3^2 \\
& \qquad \qquad \quad + \alpha^{23,(7)} x_2^2 x_3^2
+ \alpha^{23,(8)} x_1 x_2^2 x_3^2,
\end{split}
\end{align*}
and recall that by definition
\begin{align*}
\begin{split}
\tilde{\sigma}_{12} & \in \, \text{span} \{ 1, x_1, x_2, x_1^2, x_2^2, x_3, x_1 x_3, x_2 x_3, x_1^2 x_3, x_2^2 x_3\}, \\
\tilde{\sigma}_{13} & \in \, \text{span}\{ 1, x_1, x_3, x_1^2, x_3^2, x_2, x_1 x_2, x_2 x_3, x_1^2 x_2, x_2 x_3^2\}, \\
\tilde{\sigma}_{23} & \in \, \text{span}\{ 1, x_2, x_3, x_2^2, x_3^2, x_1, x_1 x_2, x_1 x_3, x_1 x_2^2, x_1 x_3^2\},
\end{split}
\end{align*}
and the result follows since each of the monomial appearing in the expansion
of $\h{\sigma}_{ij}, i<j$, appears only one time and for each expansion, they are all different.

We conclude that $\h{\sigma} \in \Sigma_{\text{ncf}}^{3\text{D}}(K)$.
By the second set of degrees of freedom, 
since $\h{\sigma}_{ii} \in \P_{111}$, $\h{\sigma}_{ii} = 0$ on each face $x_i = 0,1$.
Therefore, $\h{\sigma}_{ii} = x_i(1 - x_i)\tilde{\sigma}_{ii}$ for some polynomial
$\tilde{\sigma}_{ii}$ so $\h{\sigma}_{ii}=0$ for all $i=1,2,3$. \Qed

If we denote by $n_l, l=1,2,3$ the normal in the $l$th direction, each face is given by $x_i=0,1$ for some $i \in \{1,2,3\}$,
an edge is the intersection of two faces $x_i=0,1, x_j=0,1$ for some $i,j$. We can then rewrite the degrees of freedom as follows
\begin{enumerate}
\item[(1')] $\int_{\h{e}} \h{\sigma} n_i \cdot n_j \, \h{v} \, \ud x$, $ \h{v} \in \P_1(\h{e})$, for each edge with normals $n_i$ and $n_j$,
$i \neq j, i,j=1,2,3$,
\item[(2')] $\int_{\h{f}} \h{\sigma} n \cdot n \, \h{v} \ud x, \h{v} \in \P_{1,1}(\h{f})$,
\item[(3')] $\int_{\h{f}} \h{\sigma} n_i \cdot n_l \, \h{v} \ud x,$, $\h{v} \in \mbox{span}\{1, x_k\}$,
$k \notin \{i, l\}$, for each face $\h{f}$, with normal $n_i$ and $i \neq l$ and all $l=1,2,3$.  
\end{enumerate}
Note that for $\h{v} \in V(\h{K}), \epsilon(\h{v}) \in \begin{pmatrix}
0 & \mbox{span}\{1,x_3\} & \mbox{span}\{1,x_2\} \cr
\mbox{span}\{1,x_3\}  & 0 & \mbox{span}\{1,x_1\} \cr
\mbox{span}\{1,x_2\} & \mbox{span}\{1,x_1\} & 0
\end{pmatrix}_{\Sym}$, where the zero vector space is simply denoted 0. We can thus write the fourth set of degrees of freedom as
\begin{enumerate}
\item[(4')] $\int_{\h{K}} \h{\sigma}:\epsilon(\h{v}), \ \h{v} \in V(\h{K})$. 
\end{enumerate}

Given the discussion before the above lemma, the degrees of freedom in Lemma \ref{lemdof} imply that for $\h{\sigma} \in \Sigma (\h{K})$,
$\h{\sigma} n$ is uniquely determined by the degrees of freedom on the face with normal $n$.

On the rectangular partition $\mathcal{T}_h$, 
we define $\Sigma_h$ as the space of matrix
fields which belong piecewise to $\Sigma(K)$ subject to the continuity
conditions that $\tau n$ for $\tau \in \Sigma_h$ is continuous across
faces. 
We denote by $V_h$ the space of vector fields which belongs to $V_K$ for
each rectangle $K \in \mathcal{T}_h$.

\section{Stability and Error analysis}
Our finite element spaces satisfy $\Sigma_h \subset H(\div,\Sym$)
and $V_h \subset L^2(\Omega, \mathbb{R})$ with $\div \Sigma_h \subset V_h$.
We next define an interpolation operator $\Pi_h : H^1 (\Omega,\Sym) \rightarrow \Sigma_h$ with the required properties.
Because of the edge degrees of freedom, the canonical interpolation operator for $\Sigma_h$ is not well-defined on
$H^1(\Omega,\Sym)$. As in \cite{Arnold2005}, 
we first define an interpolation operator $\Pi^0_{\h{K}}: H^1(\h{K},\Sym) \to \Sigma(\h{K})$ by
\begin{alignat*}{2}
\int_{\h{e}} (\Pi^0_{\h{K}}(\h{\sigma})) n_i \cdot n_j \, \h{v} \, \ud x & = 0 \quad && \forall \h{v} \in \P_1(\h{e}) \, 
\, \text{for} \, \h{e} \, \text{with normals} \, n_i \, \text{and} \, n_j, \, i \neq j, i,j=1,2,3, \\
\int_{\h{f}} (\h{\sigma}-\Pi^0_{\h{K}}(\h{\sigma})) n \cdot n \, \h{v} \ud x & = 0 \quad && \forall \h{v} \in \P_{1,1}(\h{f}), \\
\int_{\h{f}} (\h{\sigma}-\Pi^0_{\h{K}}(\h{\sigma})) n_i \cdot n_l \ud x & = 0 \quad &&  \text{for each face} \ \h{f}, \text{with normal $n_i$ and $i \neq l$ and all $l=1,2,3$},\\
\int_{\h{K}} (\h{\sigma}-\Pi^0_{\h{K}}(\h{\sigma})):\epsilon(\h{v}) & = 0 \quad && \forall \h{v} \in V(\h{K}).
\end{alignat*}
By Lemma \ref{lemdof}, $\Pi^0_{\h{K}}$ is well defined.
Next define $\Pi^0_K: H^1(K,\Sym) \to \Sigma(K)$ by
$$
\Pi^0_K \tau(x) = B \Pi^0_{\h{K}} \h{\tau}(\h{x}) B^T,
$$
for each rectangle $K$ of $\mathcal{T}_h$ and define $\Pi^0_h:
H^1(\Omega,\Sym) \to \Sigma_h$
by
$$
(\Pi^0_h \tau)|_K = \Pi_K \tau.
$$
A standard scaling argument gives
\begin{equation}
\|\Pi^0_{h} \tau -\tau\|_0 \leq c h \|\tau\|_1 \label{pibound},
\end{equation}
where $c$ does not depend on $h$
since $\Pi^0_h$ reproduces piecewise constant matrix fields.  
It follows that $\Pi^0_h$ is bounded on $H^1(\Omega,\Sym)$.

The proof of the commutativity property $\div \Pi^0_h \tau = P_h \div \tau$ for all $\tau \in H^1(\Omega, \Sym)$ 
is similar to the one given in \cite{Arnold2005}, pp. 1423--1424. 
Next, let $R_h$ be a Clement interpolation operator \cite{Bernardi98}
which maps 
$L^2(\Omega,\Sym)$
into the subspace of $\Sigma_h$ of continuous matrix fields whose components 
are piecewise in $P_{1,1,1}$.
We have
\begin{equation*}
\|R_h \tau - \tau \|_j \leq c h^{m-j} \|\tau\|_m, \quad 0\leq j \leq 1,
\quad j \leq m 
\leq 2, 
\end{equation*}
with $c$ independent of $h$. The interpolation operator $\Pi_h$ is defined by
\begin{equation}
\Pi_h = \Pi^0_h (I - R_h) + R_h, \label{opdef}
\end{equation}
and as in \cite{Arnold2005}, one establishes its boundedness and the commutativity property.
We conclude that the pair $(\Sigma_h,V_h)$ is stable.

For the error analysis, 
we recall that the projection operator $P_h$ satisfies the error estimate
\begin{equation}
||P_h v - v||_0 \leq c h^m ||v||_m, \mbox{ for all } v \in H^m(\Omega), \quad 0
\leq m \leq 1, \label{l2pr}
\end{equation}
since $V_h$ contains piecewise constant vector fields. We then have the following theorem

\begin{thm}
\label{th1}
Let $(\sigma, u)$ and $(\sigma_h, u_h)$ be the unique solutions of \eqref{cf} and \eqref{cfD} respectively. Then
\begin{align*}
|| \div \sigma - \div \sigma_h ||_0 &\leq c h^m || \div \sigma ||_m, \quad
0 \leq m \leq 1, \\
 || \sigma - \sigma_h ||_0 &\leq c  h \|\sigma\|_1,  \\ 
 || u - u_h ||_0 &\leq c h||u||_{2}.
\end{align*} 
\end{thm}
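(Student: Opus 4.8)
The plan is to derive the three estimates from the abstract stability theory for mixed methods, exactly as in the Arnold--Winther framework, using the ingredients already assembled in this section. We have $\Sigma_h \subset H(\div,\Omega,\Sym)$, $V_h \subset L^2(\Omega,\RR^3)$, $\div \Sigma_h \subset V_h$, the commuting interpolation operator $\Pi_h$ of \eqref{opdef} which is bounded on $H^1(\Omega,\Sym)$ and satisfies $\div \Pi_h \tau = P_h \div \tau$, and the $L^2$ projection $P_h$ onto $V_h$. Together these give the two Brezzi conditions: the inf--sup condition follows from $\div \Sigma_h \subset V_h$ plus the surjectivity/commutativity of $\Pi_h$ (the standard Fortin argument, applied to a $\tau \in H^1$ with $\div \tau = v_h$), and coercivity on the kernel $Z_h = \{\tau_h \in \Sigma_h : \div \tau_h = 0\}$ is immediate since $\div \Sigma_h \subset V_h$ forces $\div \tau_h = 0$ pointwise for $\tau_h \in Z_h$, whence $\int_\Omega A\tau_h:\tau_h \ud x \geq c\|\tau_h\|_{H(\div)}^2$ by positive-definiteness of $A$.

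Next I would invoke the standard error estimate for mixed methods in this setting (as in \cite{Arnold2005,Awanou2009}):
\begin{align*}
\|\sigma - \sigma_h\|_{H(\div)} + \|u - u_h\|_0 \leq c\Big( \inf_{\tau_h \in \Sigma_h}\|\sigma - \tau_h\|_{H(\div)} + \inf_{v_h \in V_h}\|u - v_h\|_0 \Big).
\end{align*}
The commutativity property sharpens this: since $\div \Pi_h \sigma = P_h \div \sigma = P_h f$ and $\div \sigma_h = P_h f$ as well (take $v_h$ ranging over $V_h$ in the second equation of \eqref{cfD}), one gets $\div \sigma_h = \div \Pi_h \sigma$, so the error in the divergence is controlled purely by the projection error: $\|\div \sigma - \div \sigma_h\|_0 = \|\div\sigma - P_h \div\sigma\|_0 \leq c h^m \|\div\sigma\|_m$ for $0 \leq m \leq 1$ by \eqref{l2pr}. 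This is the first stated estimate.

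For the second estimate, I would use the kernel coercivity together with $\div(\sigma - \sigma_h) = \div\sigma - P_h\div\sigma$: write $\sigma - \sigma_h = (\sigma - \Pi_h\sigma) + (\Pi_h\sigma - \sigma_h)$, note $\Pi_h\sigma - \sigma_h \in Z_h$ since both have divergence $P_h\div\sigma$, test the first equation of \eqref{cfD} with $\tau_h = \Pi_h\sigma - \sigma_h$, and bound $\|\Pi_h\sigma - \sigma_h\|_0$ by $\|A(\sigma - \Pi_h\sigma)\|_0 \leq c\|\sigma - \Pi_h\sigma\|_0 \leq c h \|\sigma\|_1$ using \eqref{pibound}; the triangle inequality and \eqref{pibound} again finish it. For the third estimate, the displacement error $\|u - u_h\|_0$ is bounded via the inf--sup condition applied to $P_h u - u_h \in V_h$: choose $\tau_h \in \Sigma_h$ with $\div \tau_h = P_h u - u_h$ and $\|\tau_h\|_{H(\div)} \leq c\|P_h u - u_h\|_0$, test \eqref{cfD} and \eqref{cf} against it, and estimate the resulting terms by $\|\sigma - \sigma_h\|_0 \leq ch\|\sigma\|_1$ already proved and $\|u - P_h u\|_0 \leq c h\|u\|_1$; then use elliptic regularity $\|\sigma\|_1 \leq c\|u\|_2$ (from $A\sigma = \epsilon(u)$ and boundedness of $A^{-1}$, together with $\|\epsilon(u)\|_1 \leq c\|u\|_2$) to rewrite the bound in terms of $\|u\|_2$.

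The main obstacle is not any single estimate — all three are routine given the machinery — but rather making sure the commutativity property $\div \Pi_h \tau = P_h \div \tau$ genuinely holds for the operator \eqref{opdef} built from $\Pi_h^0$ and the Clement operator $R_h$; this rests on the identity derived before the tensor-product subsection, namely that the degrees of freedom include $\int_f \tau n\cdot v\ud s$ and $\int_K \tau:\epsilon(v)$ for all $v \in V_K$, which is exactly what the reformulated degrees of freedom (1')--(4') supply. I would therefore state explicitly that \eqref{opdef}, like its analogue in \cite{Arnold2005}, reproduces the boundary and interior degrees of freedom needed for the integration-by-parts identity to vanish, so that $\div\Pi_h\tau = P_h\div\tau$; the rest is bookkeeping with \eqref{pibound}, \eqref{l2pr}, and the elliptic regularity estimate. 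I would keep the write-up short, citing \cite{Arnold2005,Awanou2009} for the abstract error estimate and for the commutativity argument, and present only the three chains of inequalities above.
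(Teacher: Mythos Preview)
Your proposal is correct and follows essentially the same Arnold--Winther argument the paper uses: the paper's proof also shows $\div\sigma_h = P_h\div\sigma$ for the divergence estimate, obtains $\|\sigma-\sigma_h\|_A \le \|\sigma-\Pi_h\sigma\|_A$ (your kernel argument with $\tau_h=\Pi_h\sigma-\sigma_h$ rephrased in the $A$-norm) for the stress estimate, and quotes $\|P_hu-u_h\|_0\le\|\sigma-\sigma_h\|_0$ from \cite{Arnold2002} together with $A\sigma=\epsilon(u)$ for the displacement estimate. Your write-up is somewhat more explicit about the Brezzi conditions and the inf--sup step for the displacement bound, but the underlying route is the same.
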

\begin{proof}
We only outline the proof since it is essentially the same as the one in \cite{Arnold2002}.
Define $||.||_A$ by $||\tau||_A^2=\int_{\Omega} A \tau : \tau \ud x$. Then one shows that
$$
||\sigma - \sigma_h ||_A \leq ||\sigma - \Pi_h \sigma||_A.
$$
Since the norm $||.||_A$ is equivalent to the $L^2$ norm, we obtain using \eqref{pibound},
$$
||\sigma - \sigma_h ||_0 \leq c h \|\sigma\|_1.
$$
One also shows that $\div \sigma_h = P_h \div \sigma$.
The commutativity property and \eqref{l2pr} yields the error estimate on $\div \sigma$.
For the error estimate for the displacement, again from \cite{Arnold2002}, we have
$$
||P_h u -u_h||_0 \leq ||\sigma-\sigma_h||_0,
$$
so
\begin{align*}
||u-u_h||_0 & \leq ||u-P_h u||_0 + ||P_h u -u_h||_0 \leq c h ||u||_1 + c h \|\sigma\|_1 \leq c h ||u||_{2},
\end{align*}
since $A \sigma=\epsilon(u)$. 
\end{proof}

\section{Concluding remarks}
In three dimensions, the space of rigid body motions $\mathbb{T}$
is the space of linear polynomial functions of the form
$x\mapsto a+b \times x$ for some $a,b \in \mathbb{R}^3$. It is contained in $V(K)$ for each element $K$. 
Hence, as in \cite{Arnold2002,Arnold2003,Arnold2005,Arnold2008,Awanou2009}, we can take 
$V(K)=\mathbb{T}$ and the stress space as
\begin{equation}
\tau \in \Sigma(K):=\bigg\{\begin{pmatrix} \P_{311} & \P_{2 2 1} & \P_{212} \cr
\P_{2 2 1} & \P_{131} & \P_{122} \cr
\P_{212} & \P_{122} & \P_{113}
 \end{pmatrix}_{\Sym}, \div \tau \in \mathbb{T} \bigg\}.
\end{equation}
Since for $v \in \mathbb{T}, \epsilon(v)=0$, the degrees of freedom are the same as the ones in Lemma \ref{lemdof} with the last set
of degrees of freedom removed. The error analysis is the same. We note that the elements of \cite{Hu2007/08,Man2009,Chen2010} can be simplified as well
this way.

As noted for example in  \cite{Awanou10b}, high order mixed finite elements for elasticity usually require a high number of degrees of freedom
and typically require to find the dimension and a base for the space
$$
\{\tau \in \Sigma(K), \div \sigma=0, \sigma n = 0, \, \text{on} \, \partial \Omega \}.
$$
A possible approach is to follow the proof in the simplicial case \cite{Arnold2008}, but this goes well
beyond the scope of these remarks. 
Thus for that reason as well, we did not pursue here the construction of a discrete elasticity sequence.

\section{Acknowledgement}
The author thanks two anonymous referees for suggestions and questions leading to a better version of the paper. The author was supported in part by
NSF grant DMS-0811052 and the Sloan Foundation.


\end{document}